\newtheorem{theorem}{Theorem}[section]
\newtheorem{lemma}[theorem]{Lemma}
\definecolor{pink}{rgb}{1, .75, .8}
\definecolor{lgrey}{gray}{.85}
\newcommand{\norm}[1]{\left\|#1\right\|}  
\begin{document}

\title[Proximal Delaunay Triangulation Regions]{Proximal Delaunay Triangulation Regions}

\author[J.F. Peters]{J.F. Peters}
\email{James.Peters3@umanitoba.ca}
\address{Computational Intelligence Laboratory,
University of Manitoba, WPG, MB, R3T 5V6, Canada}
\thanks{The research has been supported by the Scientific and Technological Research
Council of Turkey (T\"{U}B\.{I}TAK) Scientific Human Resources Development
(BIDEB) under grant no: 2221-1059B211402463 and Natural Sciences \&
Engineering Research Council of Canada (NSERC) discovery grant 185986.}

\subjclass[2010]{Primary 65D18; Secondary 54E05, 52C20, 52C22}

\date{}

\dedicatory{Dedicated to the Memory of Som Naimpally}

\begin{abstract}
A main result in this paper is the proof that proximal Delaunay triangulation regions are convex polygons.  In addition, it is proved that every Delaunay triangulation region has a local Leader uniform topology.  
\end{abstract}

\keywords{Convex polygon, Delaunay triangulation region, Leader uniform topology, proximal.}

\maketitle

\section{Introduction}
Delaunay triangulations, introduced by B.N Delone [Delaunay]~\cite{Delaunay1934}, represent pieces of a continuous space.  This representation supports numerical algorithms used to compute properties such as the density of a space.  A \emph{triangulation} is a collection of triangles, including the edges and vertices of the triangles in the collection.  A 2D \emph{Delaunay triangulation} of a set of sites (generators) $S\subset \mathbb{R}^2$ is a triangulation of the points in $S$.  Let $p,q\in S$.  A straight edge connecting $p$ and $q$ is a \emph{Delaunay edge} if and only if the Vorono\"{i} region of $p$~\cite{Edelsbrunner2014,Peters2014arXivVoronoi} and Vorono\"{i} region of $q$ intersect along a common line segment~\cite[\S I.1, p. 3]{Edelsbrunner2001}.  For example, in Fig.~\ref{fig:convexPolygon}, $V_p \cap V_q = \overline{xy}$.  Hence, $\overline{pq}$ is a Delaunay edge in Fig.~\ref{fig:convexPolygon}. 

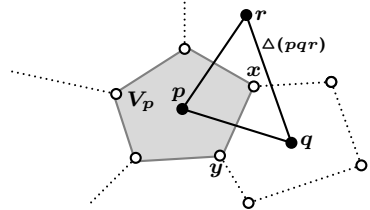
\begin{figure}[!ht]
\begin{center}
 \begin{pspicture}
 (0.0,1.2)(7.5,3.0)
\psframe[linecolor=white](2.5,0.5)(6.5,3.5)
\rput(4.5,2){\PstPentagon[yunit=0.8,fillstyle=solid,fillcolor=lgrey,linestyle=solid,linecolor=gray]}
\pscircle[fillstyle=solid,fillcolor=black](4.5,2.0){0.08}
\psline[linestyle=solid,dotsep=0.03]{-}(4.5,2.0)(5.35,3.25)
\pscircle[fillstyle=solid,fillcolor=black](5.35,3.25){0.08}
\psline[linestyle=solid,dotsep=0.03]{-}(4.5,2.0)(5.95,1.55)
\pscircle[fillstyle=solid,fillcolor=black](5.95,1.55){0.08}
\psline[linestyle=solid]{-}(5.35,3.25)(5.95,1.55)
\psline[linewidth=0.8pt,linestyle=dotted,dotsep=0.05,linecolor=black]{-}(4.53,2.8)(4.53,3.5)
\pscircle[fillstyle=solid,fillcolor=white,linecolor=black](4.53,2.8){0.08}
\psline[linewidth=0.8pt,linestyle=dotted,dotsep=0.05,linecolor=black]{-}(5.45,2.30)(6.48,2.36)
\pscircle[fillstyle=solid,fillcolor=white,linecolor=black](6.48,2.36){0.08}
\pscircle[fillstyle=solid,fillcolor=white,linecolor=black](5.45,2.30){0.08}
\psline[linewidth=0.8pt,linestyle=dotted,dotsep=0.05,linecolor=black]{-}(3.62,2.20)(2.23,2.5)
\pscircle[fillstyle=solid,fillcolor=white,linecolor=black](3.62,2.20){0.08}
\psline[linewidth=0.8pt,linestyle=dotted,dotsep=0.05,linecolor=black]{-}(3.88,1.35)(3.28,0.75)
\pscircle[fillstyle=solid,fillcolor=white,linecolor=black](3.88,1.35){0.08}
\psline[linewidth=0.8pt,linestyle=dotted,dotsep=0.05,linecolor=black]{-}(5.00,1.38)(5.38,0.75)
\psline[linewidth=0.8pt,linestyle=dotted,dotsep=0.05,linecolor=black]{-}(5.38,0.75)(6.88,1.25)
\psline[linewidth=0.8pt,linestyle=dotted,dotsep=0.05,linecolor=black]{-}(6.88,1.25)(6.48,2.36)
\pscircle[fillstyle=solid,fillcolor=white,linecolor=black](5.00,1.38){0.08}
\pscircle[fillstyle=solid,fillcolor=white,linecolor=black](5.38,0.75){0.08}
\pscircle[fillstyle=solid,fillcolor=white,linecolor=black](6.88,1.25){0.08}
\pscircle[fillstyle=solid,fillcolor=white,linecolor=black](6.48,2.36){0.08}
\rput(4.45,2.2){\footnotesize$\boldsymbol{p}$}\rput(6.15,1.55){\footnotesize$\boldsymbol{q}$}
\rput(5.55,3.25){\footnotesize$\boldsymbol{r}$}\rput(5.45,2.50){\footnotesize$\boldsymbol{x}$}
\rput(4.95,1.18){\footnotesize$\boldsymbol{y}$}
\rput(5.98,2.85){\tiny$\boldsymbol{\bigtriangleup(pqr)}$}
\rput(3.95,2.1){\footnotesize$\boldsymbol{V_p}$}
 \end{pspicture}
\end{center}
\caption[]{$p,q\in S,\overline{pq}$ = Delaunay Edge}
\label{fig:convexPolygon}
\end{figure} 

A triangle with vertices $p,q,r\in S$ is a \emph{Delaunay triangle} (denoted $\bigtriangleup(pqr)$ in Fig.~\ref{fig:convexPolygon}), provided the edges in the triangle are Delaunay edges.  
This paper introduces proximal Delaunay triangulation regions derived from the sites of Vorono\"{i} regions~\cite{Peters2014arXivVoronoi}, which are named after the Ukrainian mathematician Georgy Vorono\"{i}~\cite{Voronoi1907}.

A nonempty set $A$ of a space $X$ is a \emph{convex set}, provided $\alpha A + (1-\alpha)A\subset A$ for each $\alpha\in[0,1]$~\cite[\S 1.1, p. 4]{Beer1993}.  
A \emph{simple convex set} is a closed half plane (all points on or on one side of a line in $R^2$~\cite{Edelsbrunner2014}.  

\begin{lemma}\label{lem:convexity}~{\rm \cite[\S 2.1, p. 9]{Edelsbrunner2014}} The intersection of convex sets is convex.
\end{lemma}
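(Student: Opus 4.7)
The plan is to unfold the definition of convexity supplied in the paper and reduce the statement about the intersection to the convexity of each individual set. I would let $\{A_i\}_{i\in I}$ be an arbitrary collection of convex subsets of the ambient space $X$ and set $B=\bigcap_{i\in I}A_i$, assuming $B$ is nonempty so that the paper's definition of a convex set applies to it; if $B=\emptyset$ the claim is vacuous under that definition.

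Next I would fix $\alpha\in[0,1]$ and verify the inclusion $\alpha B+(1-\alpha)B\subset B$ pointwise. Pick an arbitrary $z\in \alpha B+(1-\alpha)B$, so that $z=\alpha x+(1-\alpha)y$ for some $x,y\in B$. Since $B\subset A_i$ for every $i\in I$, the points $x$ and $y$ both lie in each $A_i$. The convexity of $A_i$ then applies to these particular points, giving $\alpha x+(1-\alpha)y\in A_i$. As this holds for every index $i$, the combination $z$ belongs to $\bigcap_{i\in I}A_i=B$. This establishes $\alpha B+(1-\alpha)B\subset B$ for every $\alpha\in[0,1]$, which is exactly the convexity of $B$.

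The only subtlety, and the step I would be most careful about, is the transition from the set-level statement $\alpha A_i+(1-\alpha)A_i\subset A_i$ to the pointwise consequence that the specific element $\alpha x+(1-\alpha)y$ lies in $A_i$; one must instantiate the hypothesis at the chosen $x,y$ rather than invoke an abstract set inclusion. Beyond this minor point the argument is entirely routine, and being a standard textbook fact it presents no conceptual obstacle.
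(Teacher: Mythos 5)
Your proof is correct and follows essentially the same argument as the paper: take two points of the intersection, note they lie in every member of the family, and use each set's convexity to keep the convex combination inside, hence inside the intersection. The only differences are cosmetic --- the paper states it for two convex sets $A,B\subset\mathbb{R}^2$ using the line-segment phrasing, while you work with an arbitrary family and unfold the $\alpha A+(1-\alpha)A\subset A$ definition explicitly (and sensibly flag the empty-intersection case), which makes your version slightly more general and more careful but not a different route.
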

\begin{proof}
Let $A,B\subset \mathbb{R}^2$ be convex sets and let $K = A\cap B$.  For every pair points $x,y\in K$, the line segment $\overline{xy}$ connecting $x$ and $y$ belongs to $K$, since this property holds for all points in $A$ and $B$.   Hence, $K$ is convex.
\end{proof}

\section{Preliminaries}
Delaunay triangles are defined on a finite-dimensional normed linear space $E$ that is topological.  For simplicity, $E$ is the Euclidean space $\mathbb{R}^2$.  

\setlength{\intextsep}{0pt}
\begin{wrapfigure}[9]{R}{0.40\textwidth}
\begin{minipage}{4.2 cm}
\begin{center}
 \begin{pspicture}
 (0.0,1.0)(2.5,3.5)
\pscircle[linecolor=black,linestyle=dotted,dotsep=0.05](1.88,2.35){1.08}
\pscircle[fillstyle=solid,fillcolor=white,linecolor=black](1.88,2.35){0.08}
\psline[linestyle=solid,showpoints=true,dotscale=1.0]{-}(0.84,2.35)(2.22,1.35)(2.75,3.00)(0.84,2.35)
\psline[linestyle=dotted,dotsep=0.03]{-}(1.88,2.35)(0.58,1.25)
\psline[linestyle=dotted,dotsep=0.03]{-}(1.88,2.35)(3.50,2.00)
\psline[linestyle=dotted,dotsep=0.03]{-}(1.88,2.35)(1.50,3.50)
\rput(0.58,2.35){\footnotesize$\boldsymbol{p}$}
\rput(2.32,1.20){\footnotesize$\boldsymbol{q}$}
\rput(2.91,3.05){\footnotesize$\boldsymbol{r}$}
\rput(1.88,2.15){\footnotesize$\boldsymbol{u}$}
 \end{pspicture}
\caption{Circumcircle}
\label{fig:circumcircle}
\end{center}
\end{minipage}
\end{wrapfigure}
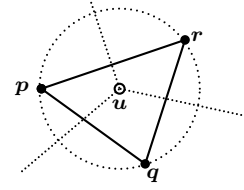
\setlength{\intextsep}{2pt}

The \emph{closure} of $A\subset E$ (denoted $\mbox{cl}A$) is defined by $\mbox{cl}A=\left\{x\in E: inf\norm{x-a}=0, a\in A\right\}$.  Let $A^c$ be complement of $A$ (all points of $E$ not in $A$).  The \emph{boundary} of $A$ (denoted $\mbox{bdy}A$) is the set of all points in $A$ and $A^c$.  An important structure is the \emph{interior} of $A$ (denoted $\mbox{int}A$), defined by $\mbox{int}A = \mbox{cl}A - \mbox{bdy}A$.  For example, the interior of a Delaunay edge $\overline{pq}$ are all of the points in the segment, except the endpoints $p$ and $q$.

The space $E$ endowed with a proximity relation $\delta$ (near) and its counterpart $\underline{\delta}$ (far) facilitates the description of properties of Delaunay edges, triangles, triangulations and regions.  Let $A,B\subset E$.  The set $A$ is near $B$ (denoted $A\ \delta\ B$), provided $\mbox{cl}A\cap \mbox{cl}B\neq \emptyset$~\cite{Concilio2009}, which is central in near set theory~\cite{Naimpally2013,Peters2012ams}.  Sets $A,B$ are \emph{far} apart (denoted $A\ \underline{\delta}\ B$), provided $\mbox{cl}A\cap \mbox{cl}B = \emptyset$. For example, Delaunay edges $\overline{pq}\ \delta\ \overline{qr}$ are near, since the edges have a common point, {\em i.e.}, $q\in \overline{pq}\cap \overline{qr}$, {\em e.g.}, $\overline{pq}\ \delta\ \overline{qr}$ in Fig.~\ref{fig:convexPolygon}.  By contrast, $\overline{pr}\ \underline{\delta}\ \overline{xy}$ in Fig.~\ref{fig:convexPolygon}.


For a Delaunay triangle $\bigtriangleup(pqr)$, a \emph{circumcircle} passes through the vertices $p,q,r$ of the triangle (see Fig.~\ref{fig:circumcircle} for an example).  The center of a circumcircle $u$ is the Vorono\"{i} vertex at the intersection of three Vorono\"{i} regions, {\em i.e.}, $u = V_p\cap V_q\cap V_r$. The circumcircle radius $\rho = \norm{u-p} = \norm{u-q} = \norm{u-r}$~\cite[\S I.1, p. 4]{Edelsbrunner2001}, which is the case in Fig.~\ref{fig:circumcircle}. 

\begin{lemma}\label{lem:circumcircle}
Let circumcircle $\bigcirc(pqr)$ pass through the vertices of a Delaunay triangle $\bigtriangleup(pqr)$, then the following statements are equivalent.
\begin{compactenum}[1$^o$]
\item The center $u$ of $\bigcirc(pqr)$ is a vertex common to Vorono\"{i} regions $V_p,V_q,V_r$.
\item $u = \mbox{cl}V_p\cap \mbox{cl}V_q\cap \mbox{cl}V_r$.
\item $V_p\ \delta\ V_q\ \delta\ V_r$. 
\end{compactenum}
\end{lemma}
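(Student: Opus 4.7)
The plan is to prove the three statements equivalent by a cyclic chain $1^\circ \Rightarrow 2^\circ \Rightarrow 3^\circ \Rightarrow 1^\circ$, using the circumcenter characterization of a Vorono\"{i} vertex together with the definition of the proximity $\delta$ recalled in the Preliminaries.

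For $1^\circ \Rightarrow 2^\circ$, I would observe that the point $u$ shared by $V_p, V_q, V_r$ lies on the boundary of each region, so, since $\mbox{bdy}V_p \subset \mbox{cl}V_p$ and similarly for $q, r$, we immediately get $u \in \mbox{cl}V_p \cap \mbox{cl}V_q \cap \mbox{cl}V_r$. To upgrade this containment to the equality in $2^\circ$, I would invoke the standing general position hypothesis on the sites of $S$: three Vorono\"{i} cells of distinct sites can share at most one boundary point, namely the Vorono\"{i} vertex, which is exactly the circumcenter $u$ by the discussion preceding the lemma. The implication $2^\circ \Rightarrow 3^\circ$ is then immediate from the definition $A\ \delta\ B \Leftrightarrow \mbox{cl}A \cap \mbox{cl}B \neq \emptyset$: the point $u$ witnesses that each of the three pairwise closure-intersections is nonempty, so $V_p\ \delta\ V_q\ \delta\ V_r$.

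The main obstacle will be $3^\circ \Rightarrow 1^\circ$, since pairwise nearness does not in general force a common point. The key is to exploit the hypothesis that $\bigtriangleup(pqr)$ is a Delaunay triangle. By the definition recalled in the introduction, each Delaunay edge $\overline{pq}, \overline{qr}, \overline{pr}$ corresponds to a Vorono\"{i} edge, i.e., $\mbox{cl}V_p \cap \mbox{cl}V_q$, $\mbox{cl}V_q \cap \mbox{cl}V_r$, and $\mbox{cl}V_p \cap \mbox{cl}V_r$ each contain a common line segment rather than just a point. I would then argue that the three Vorono\"{i} edges emanating from these pairwise intersections must concur at a single point, as they are each perpendicular bisectors of the sides of $\bigtriangleup(pqr)$, and the three perpendicular bisectors of a triangle meet in the circumcenter. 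That point is equidistant from $p, q, r$ with common distance $\rho$, hence belongs to all three Vorono\"{i} regions and is precisely the vertex $u$ of $1^\circ$.

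The expected difficulty is almost entirely in the last implication; the first two reduce to unpacking definitions and applying general position, while the third requires the geometric fact that the Delaunay condition forces three adjacent Vorono\"{i} cells to share a single common vertex, which is the circumcenter of the dual Delaunay triangle.
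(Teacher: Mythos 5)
The paper offers essentially no argument for this lemma --- its proof is the single line ``$1^{o}\Leftrightarrow 2^{o}\Leftrightarrow 3^{o}$'' --- so there is no method of the paper to compare yours against; judged on its own terms, your cyclic chain is the right shape. Your $1^{o}\Rightarrow 2^{o}$ (the common vertex lies in each closure, and general position guarantees three cells of distinct sites meet in at most that one point) and your $2^{o}\Rightarrow 3^{o}$ (the point $u$ witnesses each pairwise closure intersection, so $V_p\ \delta\ V_q\ \delta\ V_r$) are unobjectionable and consistent with the paper's conventions.

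The genuine gap is in $3^{o}\Rightarrow 1^{o}$, exactly where you predicted difficulty, and your proposed argument does not close it. Two problems. First, even granting that each pair of cells shares a Vorono\"{i} edge, those edges are only subsegments of the perpendicular bisectors of the sides of $\bigtriangleup(pqr)$; the three bisector \emph{lines} do concur at the circumcenter $u$, but nothing in your argument shows that the Vorono\"{i} edges extend far enough to reach $u$. Second, equidistance of $u$ from $p,q,r$ does not place $u$ in the three cells: $u\in \mbox{cl}V_p\cap \mbox{cl}V_q\cap \mbox{cl}V_r$ requires that no other site be strictly closer to $u$ than $\rho$, i.e., that the circumcircle $\bigcirc(pqr)$ be empty of sites. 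Under the paper's stated definition of a Delaunay triangle (its three edges are Delaunay edges) this can genuinely fail: put a fourth site $t$ inside $\bigtriangleup(pqr)$ near the circumcenter; each of $\overline{pq},\overline{qr},\overline{pr}$ remains a Delaunay edge (a large circle through the pair, bulging away from the triangle, misses $t$), so the cells $V_p,V_q,V_r$ are pairwise near and $3^{o}$ holds, yet $u$ lies in the interior of $V_t$ and is not a vertex of any of the three cells, so $1^{o}$ and $2^{o}$ fail. To repair the implication you must use (or take as the definition) the empty-circumcircle characterization of a Delaunay triangle, i.e., $\norm{u-t}\geq\rho$ for every site $t$, which immediately yields $u\in \mbox{cl}V_p\cap \mbox{cl}V_q\cap \mbox{cl}V_r$; this property cannot be extracted from the bisector-concurrence argument alone.
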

\begin{proof}$\mbox{}$\\
1$^o\Leftrightarrow 2^o\Leftrightarrow 3^o$.
\end{proof}

\begin{theorem}\label{lem:DelaunayTriangle}
A triangle $\bigtriangleup(pqr)$ is a Delaunay triangle if and only if the center of the circumcircle $\bigcirc(pqr)$ is the vertex common to three Vorono\"{i} regions.
\end{theorem}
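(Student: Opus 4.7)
The plan is to derive the theorem directly from Lemma~\ref{lem:circumcircle} combined with the definition of a Delaunay edge, treating the biconditional as two short arguments.

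For the forward implication, I would assume that $\bigtriangleup(pqr)$ is a Delaunay triangle. By definition each of its three sides $\overline{pq}, \overline{qr}, \overline{pr}$ is a Delaunay edge, so each pair among $V_p, V_q, V_r$ intersects along a common line segment. In particular, every pair of closures is non-empty, so $V_p\ \delta\ V_q$, $V_q\ \delta\ V_r$, and $V_p\ \delta\ V_r$, that is, $V_p\ \delta\ V_q\ \delta\ V_r$. Condition $3^o$ of Lemma~\ref{lem:circumcircle} then yields condition $1^o$, giving a vertex common to $V_p, V_q, V_r$, which is exactly the circumcircle center $u$.

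For the reverse implication, I would assume that the center $u$ of $\bigcirc(pqr)$ is the vertex common to $V_p, V_q, V_r$. Applying Lemma~\ref{lem:circumcircle} in the direction $1^o\Rightarrow 3^o$ produces $V_p\ \delta\ V_q\ \delta\ V_r$. To conclude that each of $\overline{pq}, \overline{qr}, \overline{pr}$ is a Delaunay edge, I would then recall that in the plane the shared boundary of two adjacent Vorono\"{i} regions lies on the perpendicular bisector of the corresponding sites; since $V_p$ and $V_q$ are full-dimensional and share the vertex $u$, their common boundary must contain a segment emanating from $u$. The same holds for the other two pairs, so all three edges are Delaunay and $\bigtriangleup(pqr)$ is a Delaunay triangle.

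The main obstacle is exactly this last step of the reverse direction: the proximity relation $\delta$ only certifies non-empty closure intersection, while the definition of a Delaunay edge requires a one-dimensional shared segment. The author most likely collapses this gap by appealing to Lemma~\ref{lem:circumcircle} and leaving the planar geometric picture implicit, but a fully rigorous account needs the perpendicular-bisector observation above (or a preliminary remark that in $\mathbb{R}^2$ two full-dimensional Vorono\"{i} cells meeting at a Vorono\"{i} vertex necessarily share an edge through that vertex).
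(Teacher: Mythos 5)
Your proposal follows the same skeleton as the paper's argument---both route the biconditional through Lemma~\ref{lem:circumcircle}---but the paper's proof is nothing more than a chain of asserted equivalences (circumcircle center $u=\mbox{cl}V_p\cap \mbox{cl}V_q\cap \mbox{cl}V_r$ $\Leftrightarrow$ $u$ is the vertex common to $V_p,V_q,V_r$ $\Leftrightarrow$ $\overline{pq},\overline{pr},\overline{qr}$ are Delaunay edges $\Leftrightarrow$ $\bigtriangleup(pqr)$ is Delaunay), with no justification offered for the middle step. You split the statement into two implications and explicitly confront exactly what the paper glosses over: nearness $\delta$, or membership of $u$ in the three closures, certifies only point contact, whereas the paper's definition of a Delaunay edge requires the two Vorono\"{i} regions to meet along a common line segment. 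Your perpendicular-bisector/adjacency argument is what actually closes that gap, so your write-up is strictly more careful than the printed proof. (Note also that Lemma~\ref{lem:circumcircle} is stated under the hypothesis that $\bigtriangleup(pqr)$ is already a Delaunay triangle, so invoking its implication $1^o\Rightarrow 3^o$ in the reverse direction is technically circular; in your argument this detour is harmless because the bisector step, not the lemma, carries the load.)

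One caveat on your bridging claim: it is not true unconditionally that two full-dimensional Vorono\"{i} cells meeting at a Vorono\"{i} vertex must share an edge through that vertex. If four sites $p,q,r,s$ lie on a common circle centered at $u$, then $u$ is a vertex of all four cells, but ``opposite'' cells such as $V_p$ and $V_r$ may touch only at the single point $u$. In that configuration the theorem itself fails under the segment-based definition of Delaunay edge: $u$ is common to $\mbox{cl}V_p, \mbox{cl}V_q, \mbox{cl}V_r$, yet $\overline{pr}$ is not a Delaunay edge. So your final step needs---and the theorem implicitly assumes---a general-position hypothesis (no four sites cocircular), under which exactly three cells meet at each Vorono\"{i} vertex and your adjacency argument goes through. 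With that hypothesis made explicit, your proof is correct and fills a genuine hole that the paper leaves open.
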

\begin{proof}
The circle $\bigcirc(pqr)$ has center $u = \mbox{cl}V_p\cap \mbox{cl}V_q\cap \mbox{cl}V_r$ (Lemma~\ref{lem:circumcircle}) $\Leftrightarrow\\ 
\bigcirc(pqr)$ center is the vertex common to three Vorono\"{i} regions $V_p,V_q,V_r$  
$\Leftrightarrow\\ 
\overline{pq}, \overline{pr}, \overline{qr}$ are Delaunay edges $\Leftrightarrow$\\ 
$\bigtriangleup(pqr)$ is a Delaunay triangle.
\end{proof}

\begin{figure}[!ht]
\begin{center}
 \begin{pspicture}
 (0.0,1.5)(7.5,3.2)
\pscircle[linecolor=black,linestyle=dotted,dotsep=0.05](3.88,2.35){1.08}
\pscircle[fillstyle=solid,fillcolor=white,linecolor=black](3.88,2.35){0.08}
\psline[linestyle=solid,showpoints=true,dotscale=1.0]{-}(2.84,2.35)(4.22,1.35)(4.75,3.00)(2.84,2.35)
\psline[linestyle=dotted,dotsep=0.03]{-}(3.88,2.35)(2.58,1.25)
\psline[linestyle=dotted,dotsep=0.03]{-}(3.88,2.35)(5.50,2.00)
\psline[linestyle=dotted,dotsep=0.03]{-}(3.88,2.35)(3.50,3.50)
\pscircle[fillstyle=solid,fillcolor=black,linecolor=black](6.50,1.80){0.07}
\pscircle[fillstyle=solid,fillcolor=white,linecolor=black](5.50,2.00){0.08}
\psline[linestyle=solid]{-}(4.75,3.00)(6.50,1.80)
\psline[linestyle=solid]{-}(4.22,1.35)(6.50,1.80)
\rput(2.58,2.35){\footnotesize$\boldsymbol{p}$}
\rput(4.32,1.15){\footnotesize$\boldsymbol{q}$}
\rput(4.91,3.15){\footnotesize$\boldsymbol{r}$}
\rput(6.69,1.93){\footnotesize$\boldsymbol{t}$}
 \end{pspicture}
\end{center}
\caption[]{Strongly near Delaunay triangles $\bigtriangleup(pqr)$ and $\bigtriangleup(qrt)$}
\label{fig:nearTriangles}
\end{figure}

\section{Main Results}
Vorono\"{i} regions $V_p,V_q$ are strongly near (denoted $V_p\ \mathop{\delta}\limits^{\doublewedge}\ V_q$) if and only if the regions have a common edge.  For example, $V_p\ \mathop{\delta}\limits^{\doublewedge}\ V_q$ in Fig.~\ref{fig:convexPolygon}.  \emph{Strongly near} Delaunay triangles have a common edge.  Delaunay triangles $\bigtriangleup(pqr)$ and $\bigtriangleup(qrt)$ are strongly near in Fig.~\ref{fig:nearTriangles}, since edge $\overline{qr}$ is common to both triangles.  In that case, we write $\bigtriangleup(pqr)\ \mathop{\delta}\limits^{\doublewedge}\ \bigtriangleup(qrt)$.

\begin{theorem}
The following statements are equivalent.
\begin{compactenum}[1$^o$]
\item $\bigtriangleup(pqr)$ is a Delaunay triangle.
\item Circumcircle $\bigcirc(pqr)$ has center $u = \mbox{cl}V_p\cap \mbox{cl}V_q\cap \mbox{cl}V_r$.
\item $V_p\ \mathop{\delta}\limits^{\doublewedge}\ V_q\ \mathop{\delta}\limits^{\doublewedge}\  V_r$.
\item $\bigtriangleup(pqr)$ is the union of convex sets.
\end{compactenum}
\end{theorem}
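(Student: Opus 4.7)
The plan is to establish the equivalences as a cycle $1^o \Rightarrow 2^o \Rightarrow 3^o \Rightarrow 4^o \Rightarrow 1^o$, leaning on the definitions of Delaunay edge, strong proximity, and convex set introduced in the preliminaries, together with Lemmas~\ref{lem:convexity}, \ref{lem:circumcircle} and Theorem~\ref{lem:DelaunayTriangle}.

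The implication $1^o \Leftrightarrow 2^o$ is exactly Theorem~\ref{lem:DelaunayTriangle}, so I would invoke it verbatim. For $2^o \Rightarrow 3^o$, I would argue that if $u = \mbox{cl}V_p \cap \mbox{cl}V_q \cap \mbox{cl}V_r$, then each pair among $V_p, V_q, V_r$ meets along a Voronoi edge terminating at $u$; since strong proximity $\mathop{\delta}\limits^{\doublewedge}$ is defined by the existence of a common edge, this yields $V_p\ \mathop{\delta}\limits^{\doublewedge}\ V_q\ \mathop{\delta}\limits^{\doublewedge}\ V_r$.

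For $3^o \Rightarrow 4^o$, I would observe that a common edge between $V_p$ and $V_q$ is precisely what the introduction requires for $\overline{pq}$ to be a Delaunay edge, and likewise for $\overline{pr}$ and $\overline{qr}$. Hence $\bigtriangleup(pqr)$ is a Delaunay triangle whose underlying closed region is the convex hull of $\{p,q,r\}$. This region can be exhibited as a union of convex sets in the obvious natural way: the three edges $\overline{pq}, \overline{qr}, \overline{pr}$ (each a closed line segment, hence convex) together with the interior of $\bigtriangleup(pqr)$ (convex as the intersection of three half-planes, by Lemma~\ref{lem:convexity}). The triangle itself, being convex, could also be written trivially as the singleton union $\{\bigtriangleup(pqr)\}$.

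The main obstacle is $4^o \Rightarrow 1^o$, because read in isolation the statement ``$\bigtriangleup(pqr)$ is a union of convex sets'' is essentially vacuous -- every planar set decomposes into singletons, each convex. I expect to dispatch this direction by reading $\bigtriangleup(pqr)$ as the triangular region already fixed on the sites $p,q,r \in S$, so that closing the cycle is a matter of noting that the Voronoi characterizations $2^o$ and $3^o$ are forced by the geometric configuration and then returning to $1^o$ through Theorem~\ref{lem:DelaunayTriangle}. The delicate point to spell out is therefore the exact sense in which the convex-union decomposition is assumed to be the canonical edges-plus-interior decomposition forced by three vertices $p,q,r$, rather than an arbitrary covering.
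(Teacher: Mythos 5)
Your route is essentially the paper's: the paper also dispatches $1^o\Leftrightarrow 2^o$ by citing Theorem~\ref{lem:DelaunayTriangle}, passes to $3^o$ by the definition of strong nearness of Vorono\"{i} regions, and then handles $4^o$ by observing that the Delaunay edges $\overline{pq},\overline{pr},\overline{qr}$ are convex sets meeting pairwise in common vertices (invoking Lemma~\ref{lem:convexity}), i.e.\ it silently adopts exactly the ``canonical edges decomposition'' reading you propose for the union in $4^o$. The one place where you hesitate --- the direction from $4^o$ back to $1^o$, which you rightly note is vacuous if ``union of convex sets'' is taken literally, since any planar set is a union of convex singletons --- is not resolved in the paper either: its proof is written as a chain of biconditionals and offers no argument for how an arbitrary convex-union decomposition forces the Delaunay/Vorono\"{i} conditions, only the forward observation that the edges of a Delaunay triangle are convex and pairwise near. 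So your flagged ``delicate point'' is a genuine weakness of the theorem statement itself rather than a gap peculiar to your attempt; if anything, your proposal (edges plus interior, interior convex as an intersection of half-planes via Lemma~\ref{lem:convexity}) spells out more than the paper does, and your explicit identification of the problematic direction is the honest way to present it.
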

\begin{proof}$\mbox{}$\\
1$^o\Leftrightarrow 2^o$, since the center of the circumcircle $\bigcirc(pqr)$ is the vertex common to three Vorono\"{i} regions (from Theorem~\ref{lem:DelaunayTriangle}),\\
$\Leftrightarrow 3^o$ ($V_p,V_q,V_r$ are strongly near),\\
$\Leftrightarrow$ each pair of Delaunay edges have a common vertex, which is a convex set (from Lemma~\ref{lem:convexity}), since $\overline{pq}, \overline{pr}, \overline{qr}$ are convex sets. 
\end{proof}

Let $S\subset \mathbb{R}^2$ be a finite set of sites, $L$ a finite set of line segments, each connecting a pair of points in $S$.  A \emph{constrained triangulation} of $S$ and $L$ is a triangulation of $S$ that contains all line segments of $L$ as edges.  By adding straight edges connecting points in $S$ so that no new edge has a point in common with previous edges, we construct a constrained triangulation.  Let $P$ be a polygon.  Two points $p,q\in P$ are \emph{visible}, provided the line segment $\overline{pq}$ is in $\mbox{int}P$~\cite{Ghosh2007}.  

Let $p,q\in S, \overline{pq}\in L$.  Points $p,q$ are visible from each other, which implies that $\overline{xy}$ contains no point of $S-\left\{p,q\right\}$ in its interior and $\overline{xy}$ shares no interior point with a constraining line segment in $L-\overline{pq}$~\cite[\S II, p. 32]{Edelsbrunner2001}. 

\begin{theorem}
If points in $\mbox{int}\ \overline{pq}$ are visible from $p,q$, then $\mbox{int}\ \overline{pq}\ \underline{\delta}\ S-\left\{p,q\right\}$ and $\overline{pq}\ \underline{\delta}\ \overline{xy}\in L-\overline{pq}$ for all $x,y\in S-\left\{p,q\right\}$.
\end{theorem}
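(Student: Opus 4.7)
The plan is to unpack the far-apart relation $\underline{\delta}$ via its definition $A\ \underline{\delta}\ B \Leftrightarrow \mbox{cl}A \cap \mbox{cl}B = \emptyset$, and then verify each disjointness by invoking the visibility hypothesis one clause at a time. First I would compute the relevant closures in $E = \mathbb{R}^2$. Since $\mbox{int}\ \overline{pq}$ differs from $\overline{pq}$ only in the two endpoints $p,q$, we have $\mbox{cl}(\mbox{int}\ \overline{pq}) = \overline{pq}$. The set $S - \{p,q\}$ is finite, hence closed, so $\mbox{cl}(S - \{p,q\}) = S - \{p,q\}$. The segments $\overline{pq}$ and $\overline{xy}$ are already closed.

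For the first claim $\mbox{int}\ \overline{pq}\ \underline{\delta}\ S - \{p,q\}$, I reduce to checking $\overline{pq} \cap (S - \{p,q\}) = \emptyset$. Writing $\overline{pq} = \mbox{int}\ \overline{pq} \cup \{p, q\}$, the visibility hypothesis yields $\mbox{int}\ \overline{pq} \cap (S - \{p,q\}) = \emptyset$ directly (visibility prohibits interior occurrence of any other site), and the endpoints $p,q$ are excluded from $S - \{p,q\}$ by definition of set difference. Thus the intersection is empty and the first assertion follows.

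For the second claim $\overline{pq}\ \underline{\delta}\ \overline{xy}$, I must establish $\overline{pq} \cap \overline{xy} = \emptyset$ whenever $\overline{xy} \in L - \overline{pq}$ with $x, y \in S - \{p,q\}$. The visibility of $p, q$ gives $\mbox{int}\ \overline{pq} \cap \overline{xy} = \emptyset$ at once, since no interior point of $\overline{pq}$ is shared with any constraining segment of $L - \overline{pq}$. It remains to exclude the endpoints $p$ and $q$ from $\overline{xy}$. Since $x, y \in S - \{p,q\}$, neither $p$ nor $q$ equals an endpoint of $\overline{xy}$; and since $\overline{xy} \in L$ is itself a constraining segment of the constrained triangulation, its endpoints are also visible from each other, so $\mbox{int}\ \overline{xy}$ contains no site of $S - \{x, y\}$, in particular neither $p$ nor $q$.

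The main (and really only) obstacle is this last endpoint bookkeeping in the second claim: the theorem as stated quantifies $\overline{xy}$ over $L - \overline{pq}$ but applies visibility only to $\overline{pq}$, so one must implicitly appeal to the symmetric visibility enjoyed by every constraining edge of $L$ in order to prevent $p$ or $q$ from sitting inside $\mbox{int}\ \overline{xy}$. Aside from that small point, the proof is essentially a mechanical translation of the definition of visibility into the language of $\underline{\delta}$.
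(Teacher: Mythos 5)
Your proposal is correct and follows the same basic route as the paper: translate the visibility hypothesis directly into the far relation $\underline{\delta}$. The difference is one of care rather than strategy. The paper's own proof is a one-pass restatement of the definition and, for the second claim, only concludes $\mbox{int}\ \overline{pq}\ \underline{\delta}\ \overline{xy}$ for $\overline{xy}\in L-\overline{pq}$, which is weaker than the stated conclusion $\overline{pq}\ \underline{\delta}\ \overline{xy}$; it never addresses whether the endpoints $p,q$ could lie on $\overline{xy}$. You do address this, by first reducing $\underline{\delta}$ to disjointness of closures (computing $\mbox{cl}(\mbox{int}\ \overline{pq})=\overline{pq}$ and noting $S-\{p,q\}$ is finite, hence closed) and then excluding $p,q$ from $\overline{xy}$ via the fact that $x,y\in S-\{p,q\}$ together with the assumption that the constraining segment $\overline{xy}$ has no site of $S$ in its interior. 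That last step is the only caveat: as you yourself flag, it is not literally contained in the theorem's hypothesis, which applies visibility only to $\overline{pq}$; it is an implicit (and standard) assumption about the segments of $L$ in a constrained triangulation. So your argument proves the theorem as stated under that mild extra assumption, whereas the paper's proof, read strictly, establishes only the interior version of the second claim; your closure bookkeeping also makes explicit the passage from set disjointness to $\underline{\delta}$, which the paper leaves implicit.
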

\begin{proof}
By definition, points $p,q\in S$ are visible, provided no points in the interior $\overline{pq}$ are in $S-\left\{p,q\right\}$.  Then $\mbox{int}\ \overline{xy}$ is far from $S-\left\{p,q\right\}$ ($\mbox{int}\ \overline{xy}\ \underline{\delta}\ S-\left\{p,q\right\}$).  In addition, no line segments $\overline{xy}\in L-\overline{pq}$ have points in common with any points in the interior of $\overline{pq}$ ($\mbox{int}\ \overline{pq}\ \underline{\delta}\ \overline{xy}$ for all line segments $\overline{xy}\in L-\overline{pq}$).
\end{proof}

 Let $K$ be a set of straight edges in a constrained triangulation of $S$ and $L$.  An edge $\overline{pq}, p,q\in S$ belongs to a constrained Delaunay triangulation of $S$ and $L$, provided
\begin{compactenum}[1$^o$]
\item Edge $\overline{pq}\in L, p,q\in S$, or
\item $p,q$ are visible from each other and there is a circle passing through $p,q$ so that each point inside the circle
is invisible from every point $x\in \mbox{int}\ \overline{pq}$. 
\end{compactenum}

\begin{lemma}\label{lem:constrained}~{\rm \cite[\S II.5, p. 32]{Edelsbrunner2001}} If every edge of $K$ is locally Delaunay, then $K$ is the constrained triangulation of $S$ and $L$.
\end{lemma}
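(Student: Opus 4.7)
The plan is to verify that each edge $\overline{pq}\in K$ meets one of the two clauses in the definition of membership in the constrained Delaunay triangulation of $S$ and $L$. I would split into two cases depending on whether $\overline{pq}\in L$. In the first case, clause 1$^o$ is satisfied trivially, so $\overline{pq}$ is a constrained Delaunay edge immediately and nothing else is needed.

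In the second case, $\overline{pq}\notin L$, so since $K$ is a triangulation of $S$ and $L$ the edge $\overline{pq}$ is shared by exactly two triangles, say $\bigtriangleup(pqr)$ and $\bigtriangleup(pqs)$, lying on opposite sides of $\overline{pq}$. The local Delaunay hypothesis at $\overline{pq}$ says that the circumcircle $\bigcirc(pqr)$ does not contain $s$ in its interior, and symmetrically $\bigcirc(pqs)$ does not contain $r$. I would then propose $\bigcirc(pqr)$ as the witness circle required by clause $2^o$ and verify that every point of $S$ lying strictly inside $\bigcirc(pqr)$ is invisible from every $x\in\mbox{int}\ \overline{pq}$.

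The verification proceeds by contradiction: suppose some $z\in S-\{p,q\}$ lies inside $\bigcirc(pqr)$ and is visible from some $x\in\mbox{int}\ \overline{pq}$. By the local Delaunay property, $z\neq s$. Visibility of $z$ from $x$ means the segment $\overline{xz}$ meets no site and crosses no member of $L$; hence $\overline{xz}$ traverses a sequence of triangles of $K$ starting from $\bigtriangleup(pqr)$. Walking along this sequence, each traversed interior edge must then be locally Delaunay, yet the presence of $z$ inside $\bigcirc(pqr)$ propagates across each such edge (using the standard fact that if a circumcircle contains a point on the opposite side of a shared edge, then the adjacent circumcircle contains the vertex opposite that edge). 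The chain terminates because $S$ is finite, producing an edge whose local Delaunay property fails, contradicting the hypothesis on $K$.

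The main obstacle I expect is the bookkeeping in the last step: in the unconstrained setting the ``empty circumcircle propagates to neighbours'' argument is a clean geometric fact, but here visibility through $L$ has to be tracked throughout the walk. One must verify that the crossing segment $\overline{xz}$ cannot be cut off by a constraining edge of $L$ without exposing a violated local Delaunay edge, and that the triangles encountered along the walk are genuinely adjacent across non-constrained edges (so that the local Delaunay hypothesis applies nontrivially). Once the visibility–compatible chain is set up correctly, the contradiction is routine, and the lemma follows by applying the two cases to every edge of $K$.
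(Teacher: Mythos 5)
The paper itself gives no proof of this lemma: it is imported verbatim from the cited source (Edelsbrunner, \S II.5), so there is no internal argument to measure yours against. Your sketch reconstructs the standard proof from that source: split on whether $\overline{pq}\in L$, and for non-constrained edges run the walking argument in which an in-circle violation propagates across locally Delaunay edges along the visibility segment $\overline{xz}$ until a crossed edge is exposed whose local Delaunay property fails. That is the right mechanism, and your remark that visibility of $z$ from $x$ forces the walk to avoid all segments of $L$ (so the propagation step is always applied to an edge where local Delaunayhood is a genuine circle condition, not the trivial convention for edges of $L$) is precisely where the constrained case differs from the unconstrained Delaunay lemma.

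Two points need tightening before this is complete. First, the walk need not begin in $\bigtriangleup(pqr)$: if $z$ lies in $\bigcirc(pqr)$ but on the $s$-side of the line through $p,q$, you must first use local Delaunayhood of $\overline{pq}$ itself to transfer the containment to $\bigcirc(pqs)$ (when $s$ is not interior to $\bigcirc(pqr)$, the cap of $\bigcirc(pqr)$ on the $s$-side is contained in $\bigcirc(pqs)$), and you should also handle hull edges of $K$ bounded by only one triangle, note that clause $2^o$ additionally requires $p,q$ to be visible from each other (immediate, since $\overline{pq}$ is an edge of a triangulation containing $L$), and take the usual general-position care when $\overline{xz}$ passes through a vertex. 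Second, and more substantively, verifying clauses $1^o$--$2^o$ for every edge only shows that each edge of $K$ is a constrained Delaunay edge; to conclude that $K$ \emph{is} the constrained triangulation of $S$ and $L$ you still need the identification step: the constrained Delaunay triangulation exists and is a triangulation of $S$ containing $L$, hence has exactly as many edges as $K$ by Euler's formula, so the inclusion of edge sets forces equality. With those repairs your argument is essentially the proof in the cited reference.
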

 
A \emph{Delaunay triangulation region} $\mathcal{D}$ is a collection of Delaunay triangles such that every pair triangles in the collection is strongly near.  That is, every Delaunay triangulation region is a triangulation of a finite set of points and the triangles in the region are pairwise strongly near.  \emph{Proximal Delaunay triangulation regions} have at least one vertex in common. 

\begin{lemma}\label{lem:convexPolygon}
A Delaunay triangulation region is a convex polygon.  
\end{lemma}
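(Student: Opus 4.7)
My plan is to combine the convexity of each individual Delaunay triangle with the pairwise strongly-near property. Each Delaunay triangle is the intersection of three closed half-planes and is therefore convex by Lemma~\ref{lem:convexity}; the task is to upgrade this local convexity to convexity of the full region $\mathcal{D}$.

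The crucial reduction is that the pairwise strong-nearness condition is extremely restrictive in the plane. A short edge count (each planar edge borders at most two triangles, yet $n$ pairwise edge-sharing triangles require at least $\binom{n}{2}$ distinct shared edges against a budget of only $3n$ edge-slots) forces $n \le 3$. I would then verify the three cases directly. For $n = 1$ the claim is immediate. For $n = 3$, one shows that the three pairwise-shared edges must all be incident to a single common vertex $v$ (the only planar configuration compatible with the count); since $\mathcal{D}$ is a triangulation of its four vertices, $v$ must lie inside the triangle formed by the other three, and that outer triangle is exactly the union $\mathcal{D}$. For $n = 2$, the two triangles share an edge $\overline{pq}$ and their remaining vertices lie on opposite sides of the line through $p$ and $q$.

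The main obstacle is the $n = 2$ case, where two triangles glued along $\overline{pq}$ could in principle form a non-convex dart. The triangulation hypothesis rules this out: if one of the four vertices lay inside the triangle of the other three, a genuine triangulation of the vertex set would need three triangles rather than two, contradicting $\mathcal{D}=\{T_1, T_2\}$. Hence the four vertices are in convex position and the union is a convex quadrilateral. In each of the three cases $\mathcal{D}$ coincides with the intersection of the closed half-planes determined by its outer edges, and Lemma~\ref{lem:convexity} completes the argument.
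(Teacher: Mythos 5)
Your argument is correct in substance, but it follows a genuinely different route from the paper. The paper's proof is a single line: it declares the lemma ``immediate'' from Lemma~\ref{lem:convexity} (the intersection of convex sets is convex), without ever explaining how a region that is a \emph{union} of Delaunay triangles is to be viewed as an intersection of convex sets. Your proof confronts the union directly: you use pairwise strong nearness to bound the number of triangles and then verify the three resulting configurations. That case analysis is precisely the content the paper's one-liner suppresses, and it makes visible where the hypotheses must do work: for two triangles glued along $\overline{pq}$ you have to invoke the requirement that $\mathcal{D}$ is a triangulation of its vertex set (so that it covers the convex hull of the vertices) in order to exclude the non-convex dart, since a pair of adjacent Delaunay triangles sitting inside a larger triangulation can perfectly well form a reflex quadrilateral. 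Under that hull-covering reading the conclusion could in fact be reached even more directly (the underlying set of a triangulation of a finite point set is the convex hull of the points, hence a convex polygon), whereas under the weaker reading ``any pairwise edge-sharing family of Delaunay triangles'' the statement fails, as your dart discussion shows; the paper never acknowledges this distinction, so your analysis is strictly more informative.

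Two small repairs to your write-up. First, the edge count as you state it ($\binom{n}{2}$ shared edges against a budget of $3n$ edge slots) only yields $n\le 7$. Counting incidences correctly, each shared edge occupies an edge slot in two distinct triangles, and distinct pairs share distinct edges (two interior-disjoint triangles sharing the same edge with $T_1$ would lie on the same side of that edge and overlap), so $n(n-1)\le 3n$, i.e.\ $n\le 4$; the case $n=4$ must then be excluded separately, since it forces every edge of every triangle to be shared, the combinatorics of a tetrahedron, which cannot be realized by four interior-disjoint triangles in the plane. Second, in the $n=3$ case the concurrency of the three shared edges deserves a sentence: if $e_{12}$, $e_{13}$, $e_{23}$ had no common vertex they would form a triangle, which forces $T_1=T_2$, a contradiction. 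With these repairs your proof is complete, and it supplies the justification that the paper's appeal to Lemma~\ref{lem:convexity} leaves out.
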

\begin{proof}
Immediate from Lemma~\ref{lem:convexity}.
\end{proof}

\begin{theorem}
Proximal Delaunay triangulation regions are convex polygons.
\end{theorem}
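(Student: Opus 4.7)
The plan is to reduce this theorem to Lemma~\ref{lem:convexPolygon} together with Lemma~\ref{lem:convexity}. Each member of a proximal family is itself a Delaunay triangulation region, so the convex polygon conclusion for individual members is already in hand; the proximity hypothesis (at least one shared vertex) only has to be folded in.

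First I would unpack the definition: a proximal family $\{\mathcal{D}_i\}$ consists of Delaunay triangulation regions that pairwise share at least one vertex. Applying Lemma~\ref{lem:convexPolygon} to each $\mathcal{D}_i$ yields that each $\mathcal{D}_i$ is a convex polygon. Second, since a shared vertex forces $\mbox{cl}\,\mathcal{D}_i\cap \mbox{cl}\,\mathcal{D}_j\neq \emptyset$, i.e., $\mathcal{D}_i\ \delta\ \mathcal{D}_j$, the regions are never separated in the proximity relation, and Lemma~\ref{lem:convexity} gives that the intersection $\bigcap_i \mathcal{D}_i$ is convex. Because each $\mathcal{D}_i$ is bounded by finitely many Delaunay edges, that intersection is bounded by finitely many line segments and is therefore a convex polygon.

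The main obstacle I anticipate is reconciling the statement with a union--versus--intersection issue: the union of two convex polygons meeting at a single vertex is generally not convex, so the theorem must be read either as a statement about each individual region (which is essentially a corollary of Lemma~\ref{lem:convexPolygon}) or about the common part of the proximal family (via Lemma~\ref{lem:convexity}). Pinning down the intended reading, and observing that under either reading the conclusion is forced by one of the two earlier lemmas, is the substantive step; once that is settled, the argument is a direct appeal to Lemma~\ref{lem:convexPolygon} and Lemma~\ref{lem:convexity}.
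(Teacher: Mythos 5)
Your proposal takes essentially the same route as the paper: the paper reads a proximal Delaunay triangulation region as the nonempty intersection of convex sets, invokes Lemma~\ref{lem:convexity} for convexity, and then Lemma~\ref{lem:convexPolygon} for the convex-polygon conclusion, which is exactly the "intersection" reading you settle on. Your explicit flagging of the union-versus-intersection ambiguity is a fair observation about the statement, but it does not change the argument, which in both cases is a direct appeal to those two lemmas.
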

\begin{proof}
Let $\mathcal{D}$ be a proximal Delaunay triangulation region.  By definition, $\mathcal{D}$ is the nonempty intersection of convex sets.  From Lemma~\ref{lem:convexity}, $\mathcal{D}$ is convex.  Hence, from Lemma~\ref{lem:convexPolygon}, $\mathcal{D}$ is a convex polygon.
\end{proof}

A \emph{local Leader uniform topology} on a set in the plane is determined by finding those sets that are close to each given set.

\begin{theorem}
Every Delaunay triangulation region has a local Leader uniform topology {\rm (application of~{\rm \cite{Leader1959}})}.
\end{theorem}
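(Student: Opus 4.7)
The plan is to build the topology on a Delaunay triangulation region $\mathcal{D}$ directly from the proximity relation $\delta$ introduced in the Preliminaries, and then invoke Leader's construction~\cite{Leader1959} to promote this proximity into a uniform topology on $\mathcal{D}$.

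First, I would fix a Delaunay triangulation region $\mathcal{D}$ and restrict the ambient proximity $\delta$ (with $A\ \delta\ B\iff \mbox{cl}A\cap \mbox{cl}B\neq\emptyset$) to subsets of $\mathcal{D}$. For each point $x\in\mathcal{D}$, I would declare a basic neighborhood of $x$ to be any $U\subseteq\mathcal{D}$ such that $\{x\}\ \underline{\delta}\ (\mathcal{D}\setminus U)$; equivalently, any $U$ whose complement in $\mathcal{D}$ is far from $x$. This is Leader's prescription applied pointwise, and hence produces the \emph{local} Leader topology announced in the statement.

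Next, I would verify the four Leader axioms for $\delta$ restricted to $\mathcal{D}$: (i) $A\ \delta\ A$ for nonempty $A$, (ii) symmetry, (iii) additivity $A\ \delta\ (B\cup C)$ iff $A\ \delta\ B$ or $A\ \delta\ C$, and (iv) the strong axiom: if $A\ \underline{\delta}\ B$ then there exists $E\subseteq\mathcal{D}$ with $A\ \underline{\delta}\ E$ and $(\mathcal{D}\setminus E)\ \underline{\delta}\ B$. Axioms (i)--(iii) fall out immediately from the closure-based definition of $\delta$, together with the fact that, by Lemma~\ref{lem:convexPolygon}, $\mathcal{D}$ is a convex polygon and hence a compact subspace of $\mathbb{R}^2$ on which closure is well-behaved. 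Leader's theorem then yields a uniform topology, and the locality of the construction gives the local Leader uniform topology on $\mathcal{D}$.

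The main obstacle is axiom (iv), the strong separation axiom that distinguishes Leader (EF-)proximities from weaker \v{C}ech proximities. Fortunately the obstacle is mild in our setting: since $\mathcal{D}$ is a compact convex polygon, any two subsets $A$ and $B$ with $\mbox{cl}A\cap \mbox{cl}B=\emptyset$ have strictly positive Euclidean distance, and a thin closed strip $E$ running between them (for instance along the perpendicular bisector of a minimum-distance pair, intersected with $\mathcal{D}$) simultaneously satisfies $A\ \underline{\delta}\ E$ and $(\mathcal{D}\setminus E)\ \underline{\delta}\ B$. Once (iv) is in hand, the remainder of the argument is a direct appeal to Leader's framework in~\cite{Leader1959}, applied locally at each $x\in\mathcal{D}$.
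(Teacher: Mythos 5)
Your route is genuinely different from the paper's, and it contains one step that fails as written: your verification of the Efremovich/Leader axiom (iv). If $E$ is ``a thin closed strip running between'' $A$ and $B$, then $B\subseteq\mathcal{D}\setminus E$, so $\mbox{cl}B\subseteq\mbox{cl}(\mathcal{D}\setminus E)$ and hence $(\mathcal{D}\setminus E)\ \delta\ B$ whenever $B\neq\emptyset$; the second half of the axiom is violated. The separating set must be a \emph{neighborhood of $B$} that stays far from $A$, not a buffer placed between the two sets. The repair is easy and uses exactly the compactness you already invoked: since $\mathcal{D}$ is a closed, bounded polygon, disjoint closed subsets $\mbox{cl}A$ and $\mbox{cl}B$ satisfy $d(\mbox{cl}A,\mbox{cl}B)=\epsilon_0>0$, so take $E=\left\{x\in\mathcal{D}: d(x,\mbox{cl}B)\leq \epsilon_0/2\right\}$; then $\mbox{cl}E$ misses $\mbox{cl}A$ and every point of $\mathcal{D}\setminus E$ is at distance at least $\epsilon_0/2$ from $\mbox{cl}B$, giving $A\ \underline{\delta}\ E$ and $(\mathcal{D}\setminus E)\ \underline{\delta}\ B$. (Also note the perpendicular bisector of one minimum-distance pair need not separate $A$ from $B$ at all, so even the first half of your claim is not automatic for that choice of $E$.) With this correction your argument goes through.

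For comparison, the paper's own proof is much lighter and essentially definitional: it treats $\mathcal{D}$ as a collection of Delaunay triangles, and for each triangle $A\in\mathcal{D}$ collects the family of triangles $B\in\mathcal{D}$ with $A\ \delta\ B$; by the description of a local Leader uniform topology given immediately before the theorem (``determined by finding those sets that are close to each given set''), this family determines the topology. Your proposal instead works with $\mathcal{D}$ as a point set, restricts the closure proximity to its subsets, checks Leader's axioms, and only then invokes~\cite{Leader1959}; that is a more demanding but more substantive justification, since it actually establishes that the restricted relation is a proximity of the kind Leader's construction requires, whereas the paper's argument leans entirely on its informal definition. Once the axiom-(iv) witness is fixed as above, your version is the more rigorous of the two.
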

\begin{proof} Let $A\in \mathcal{D}$, a set of strongly near Delaunay triangles in Delaunay triangulation $\mathcal{D}$.
Find all $B\in \mathcal{D}$ that are close to $A$, {\em i.e.}, $A\ \delta\ B$.  For each $A$, this procedure determines a family of Delaunay triangles that are near $A$.  By definition, this procedure also determines a local Leader uniform topology.
\end{proof}


\end{document}